\RequirePackage{amsmath}
\documentclass[smallextended,numbook,runningheads]{svjour3}     
\smartqed
\usepackage[T1]{fontenc}
\usepackage[latin1]{inputenc}
\usepackage[english]{babel}
\usepackage{mathptmx}
\usepackage{graphicx,subfigure,cite}
\usepackage{amsmath}
\usepackage{amssymb}
\usepackage{microtype}
\usepackage{enumerate}
\usepackage{amssymb,amsfonts}
\usepackage{silence}
\usepackage{hyperref}
\usepackage[capitalise]{cleveref}
\usepackage{todonotes}

\usepackage{tikzexternal}
\tikzexternalize

\WarningFilter{mathptmx}{There are no bold math fonts}

\newtheorem{assumption}{Assumption}[section]
\newcommand{\tsign}[2]{\ensuremath{\stackrel{\text{#1}}#2}}
\newcommand{\teq}[1]{\tsign{#1}{=}}

\newfont{\bbold}{bbold10 scaled\magstep1}
\newcommand{\Id}{1\kern-0.25em{\rm l}}
\newcommand{\D}{\displaystyle}

\newcommand{\real}{\mathbb{R}}

\providecommand{\E}{\operatorname{E}}
\providecommand{\N}{\mathbb{N}}

\newcommand{\bO}{\ensuremath{\mathcal{O}}}
\newcommand{\hmax}{{\tilde{h}}}
\newcommand{\Ih}{I^{\hmax}}
\newcommand{\xvar}{{x}}
\newcommand{\varphit}{{\varphi_{{t}}}}

\newcommand{\Mpure}{\mu}
\newcommand{\M}{{\Mpure_{{t}}}}

\newcommand{\diff}{\,\mathrm{d}}

\newcommand{\dt}{\diff t}
\newcommand{\dX}{\diff X}
\newcommand{\dW}{\diff W}
\newcommand{\dM}{\diff\M}
\newcommand{\dMpure}{\diff\Mpure}
\newcommand{\DM}[1]{{\Delta_{{#1}}\Mpure}}

\newcommand{\R}{\ensuremath{\mathbb{R}}}
\newcommand{\numweightpure}{\Phi}
\newcommand{\numweight}{{\numweightpure_{{t}}}}
\newcommand{\hnumweight}{\hat{\numweightpure}}
\newcommand{\numstageweightpure}{\eta}
\newcommand{\numstageweight}{{\numstageweightpure_{{t}}}}
\newcommand{\hnumstageweight}{\hat{\numstageweightpure}}
\newcommand{\pd}{\ensuremath{p_d}}
\newcommand{\pmu}{p_\Mpure}
\newcommand{\rh}[1]{\rho(#1)}
\usepackage{xcolor,todonotes}
\def\makeheadbox{{%
\hbox to0pt{\vbox{\baselineskip=10dd\hrule\hbox
to\hsize{\vrule\kern3pt\vbox{\kern3pt
\hbox{\textbf{To appear in BIT Numerical Mathematics}}
\hbox{DOI: \href{http://dx.doi.org/10.1007/s10543-016-0619-8}{10.1007/s10543-016-0619-8}}
\kern3pt}\hfil\kern3pt\vrule}\hrule}%
\hss}}}
\begin{document}

\title{Cheap arbitrary high order methods for single integrand SDEs}

\author{Kristian Debrabant \and Anne Kv{\ae}rn{\o} 
}

\institute{Kristian Debrabant \at
  Department of Mathematics and Computer Science, University of Southern Denmark, 5230 Odense M, Denmark\\
  \email{debrabant@imada.sdu.dk}           
  \and
  Anne Kv{\ae}rn{\o} \at
  Department of Mathematical Sciences, Norwegian University of Science and Technology, 7491 Trondheim, Norway\\
  \email{anne.kvarno@math.ntnu.no}
}

\date{}
\maketitle

\begin{abstract}For a particular class of Stratonovich SDE problems, here denoted as single
  integrand SDEs, we prove that by applying a deterministic Runge--Kutta method of order $p_d$ we obtain methods converging in the mean-square and weak sense with order $\lfloor p_d/2\rfloor$. The reason is that
     the B-series of the exact solution and numerical approximation are, due to the single integrand and the usual rules of calculus holding for Stratonovich integration, similar to the ODE case. The only difference is that integration with respect to time is replaced by integration with respect to the measure induced by the single integrand SDE.
  \keywords{Stochastic differential equation \and Runge--Kutta methods \and single integrand SDEs
  \and B-series}
  \subclass{MSC 65C30 \and MSC 60H35 \and MSC 65C20}
\end{abstract}
\section{Introduction}
\label{sec:intro}
In this paper we consider a particular class of Stratonovich stochastic differential equations (SDEs), single integrand SDEs, given by
\begin{equation}
  \dX = \lambda f(X)\dt + \sigma f(X)\circ \dW, \qquad {X}(t_0)=x_0
  \label{eq:sde}
\end{equation}
{where  $W(t)$ is a Wiener process},
$\lambda\in\{0,1\}$ and $\sigma \in \real$ is a given constant. {We assume the
  coefficient $f:\real^d \rightarrow \real^d$ to be differentiable
and that $f$ and $f'f$ satisfy a Lipschitz condition such that there exists a unique solution \cite{kloeden99nso}.}
The case
$\lambda=0$ covers Stratonovich SDEs without drift term. For $\lambda=1$, this class of methods arises frequently in applications, especially when modelling phenomena by ordinary differential equations and then introducing
{multiplicative} random fluctuations of uncertainties in time.

All results in this paper {hold as well for single integrand SDEs with} multidimensional {Wiener process},
\begin{equation}
  \dX = \lambda f(X)\dt + \sum_{i=1}^m\sigma_i f(X)\circ \dW_i, \qquad {X}(t_0)=x_0,
  \label{eq:sdemultidim}
\end{equation}
{as this case can be reduced to \eqref{eq:sde} using $\sigma  := \sqrt{\sum_{i=1}^m\sigma_i^2}$ and the Wiener process $W := \frac1\sigma\sum_{i=1}^m\sigma_iW_i$.}

Some well known examples for single integrand SDEs are the SDE describing fatigue cracking
\cite{kloeden99nso,sobczyk87smf}, the Kubo oscillator \cite{milstein04snf,cohen12otn}, the stochastic Van der Pol equation \cite{tian01itm} and certain
stochastic Hamiltonian problems, see \cite{cohen14epi}.

We are interested in solving \eqref{eq:sde} on the interval $I=[t_0,T]${.}
The equation can also be written in integral form as
\begin{equation}
  X(t) = x_0 + \int_{{t_0}}^t f(X(s))\circ \dMpure(s),
\qquad\Mpure({s}){:}=\lambda{s} + \sigma W(s).
  \label{eq:sde_int}
\end{equation}
Let a discretization ${\Ih}{:} = \{t_0, t_1, \ldots, t_N\}$ with
$t_0 < t_1 < \ldots < t_N =T$ of the time interval $I$ with step
sizes $h_n{:} = t_{n+1}-t_n$ for $n=0,1, \ldots, N-1$ {and maximal step size $\hmax:=\max_{n=1}^{N-1}h_n$} be given.

The SDE is solved by an $s$-stage stochastic Runge--Kutta method defined by
\begin{subequations}\label{eq:SRK}
  \begin{align}
    H_i &= Y_n + \DM{t_n,h_n}\sum_{j=1}^s a_{ij} f(H_j), \label{rk:a} \\
    Y_{n+1} &= Y_n + \DM{t_n,h_n}\sum_{{i}=1}^s b_i f(H_i).
    \label{eq:rkb}
  \end{align}
\end{subequations}
{Here,
$\DM{t_n,h_n}=\Mpure_{t_n}(h_n)$
with
\[
\M(s):=\Mpure(t+s)-\Mpure(t)=\lambda s + \sigma (W(t+{s}){-W(t)}),
\]}
{and typically, the coefficients $a_{ij}$ and $b_i$ will be those of a known Runge--Kutta method for ordinary differential
equations.}
For a general SDE, this simple generalization will result in a strong order 1 method, at
the best \cite{burrage96hso,burrage06con}.
But for the single integrand SDEs the situation is far better, as stated in the main result of
this paper:
{
\begin{theorem}\label{th:main}
  The Runge--Kutta method  \eqref{eq:SRK} of deterministic order $\pd$ is of mean square
  as well as weak  order  $\pmu = \lfloor\pd/2 \rfloor$, under the conditions on
  $f$ specified  in \cref{ass:f}.
  For weak convergence, it suffices that $\DM{t,h}$ is chosen such that at least the first $2\pmu+1$ moments coincide
with those of $\M(h)$, and all the others are in $\bO(h^{\pmu+1})$.
\end{theorem}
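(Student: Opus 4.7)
The plan is to exploit the observation central to the introduction: the single-integrand structure, combined with the chain rule valid for Stratonovich calculus, forces the B-series of both the exact solution and the numerical approximation to be formally identical to those in the classical Runge--Kutta theory, with a single crucial replacement. The iterated time integrals $h^{|\tau|}/\gamma(\tau)$ attached to each rooted tree $\tau$ are replaced by iterated Stratonovich integrals against the single measure $\dMpure$. The full argument then splits naturally into (i) computing those iterated integrals in closed form, (ii) matching the two B-series up to order $\pd$ via the deterministic order conditions, and (iii) translating a local-truncation estimate into global mean-square and weak convergence.

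First I would derive the formal B-series of $X(t_0+h)$ and of the numerical update $Y_1$. For the exact flow of \eqref{eq:sde}, repeated application of the Stratonovich chain rule to $f(X(s))$ yields $X(t_0+h)=\sum_{\tau}\alpha(\tau)\, I(\tau)(h)\, F(\tau)(x_0)$, where $\alpha(\tau)$ is the usual B-series coefficient, $F(\tau)$ is the elementary differential built from $f$, and $I(\tau)(h)$ is the iterated Stratonovich integral over the simplex prescribed by $\tau$, driven entirely by $\dMpure$. Because only one integrator appears, the Stratonovich shuffle product applies repeatedly, and a short induction on $|\tau|$ yields the closed form $I(\tau)(h)=\DM{t_0,h}^{|\tau|}/\gamma(\tau)$; each iterated integral collapses to a power of the scalar increment, exactly as if $\Mpure$ were linear in time. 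Expanding $Y_1$ analogously by inserting the stages of \eqref{eq:SRK} gives $Y_1=\sum_\tau\alpha(\tau)\,\numweightpure(\tau)\,\DM{t_0,h}^{|\tau|}\,F(\tau)(x_0)$, where $\numweightpure(\tau)$ is the standard deterministic elementary weight.

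The local error then reads $X(t_0+h)-Y_1=\sum_\tau\alpha(\tau)[1/\gamma(\tau)-\numweightpure(\tau)]\DM{t_0,h}^{|\tau|}F(\tau)(x_0)$, and the deterministic order-$\pd$ conditions $\numweightpure(\tau)=1/\gamma(\tau)$ for $|\tau|\le\pd$ annihilate every term up to that order. For the remaining tail, I would control two kinds of moments of $\DM{t_0,h}^{k}=(\lambda h+\sigma\Delta W)^k$: its $L^2$ norm, of size $h^{k/2}$, and its expectation, where the vanishing of odd Gaussian moments yields the improved bound $|\E\DM{t_0,h}^k|=\bO(h^{\lceil k/2\rceil})$. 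For the weak-error variant, the hypothesis that $\DM{t,h}$ matches the first $2\pmu+1$ moments of $\M(h)$ guarantees that these bounds persist with $\M$ replaced by its approximant. Combined with the smoothness of $f$ and $f'f$ imposed in \cref{ass:f}, summing over trees of order $>\pd$ gives a mean-square local error of size $h^{(\pd+1)/2}\ge h^{\pmu+1/2}$ and a weak local error of size $h^{\lceil(\pd+1)/2\rceil}= h^{\pmu+1}$.

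Finally, the global estimates follow from the standard fundamental convergence theorems of Milstein type, which deduce global mean-square order $\pmu$ from such local $L^2$- and local-expectation bounds, and analogously for weak convergence \cite{kloeden99nso}. The main technical obstacle I expect is the rigorous handling of the formal trees-series: truncating at finite order, bounding the tail uniformly in $\omega$ via moment inequalities for $\DM{t_0,h}$ together with growth estimates on the elementary differentials, and---for implicit Runge--Kutta methods---establishing existence and smooth dependence of the stage values $H_i$ on $\DM{t_0,h}$ on an event of sufficiently high probability. The shuffle identity $I(\tau)(h)=\DM{t_0,h}^{|\tau|}/\gamma(\tau)$ is itself clean once set up, but it is precisely the engine that lets a purely deterministic order analysis survive in the stochastic setting.
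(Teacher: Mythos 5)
Your proposal is correct and follows essentially the same route as the paper: B-series expansions of the exact and numerical solutions in which the iterated Stratonovich integrals collapse to $\DM{t,h}^{\rh{\tau}}/\gamma(\tau)$ (the paper's \cref{th:integration,th:BseriesexactsolutionODE,thm:Bnum}), moment estimates $\E\M(h)^{n}=\bO(h^{\lceil n/2\rceil})$ (the paper's \cref{th:mumoments}), and Milstein's fundamental theorems to pass from local to global order. The only cosmetic difference is that you invoke the shuffle product where the paper applies the Stratonovich chain rule for semimartingales, and the paper organizes the tail estimate tree by tree as order conditions rather than as a summed remainder; the substance is identical.
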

}
Here {and in the following, the $\bO$ notation refers to the absolute value and $h\to0$, }
 and for $x\in\R$, $\lfloor x\rfloor$ denotes the largest integer not larger than $x$.
\Cref{th:main} will be proved in Section \ref{sec:mainproof}.  Before going into the details, let us
justify the result by a simple numerical experiment:
\begin{example}\label{ex:intro}
  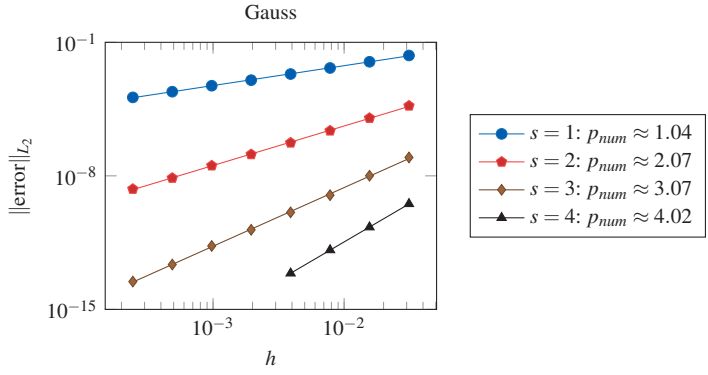
\begin{figure}[t]
    \centering
    \begin{tikzpicture}[baseline]
    \end{tikzpicture}
    \caption{The mean square error of Gauss methods applied to \eqref{eq:ex_intro}.}
    \label{fig:gauss_conv}
  \end{figure}
  We apply the $s$-stage Gauss method of deterministic order $2s$ {\cite{butcher08nmf}} to the SDE {\cite{kloeden99nso}}
  \begin{equation}
    \dX = \sqrt{1+X^2}\dt + \sigma \sqrt{1+X^2}\circ \dW, \qquad X(0)= 0
    \label{eq:ex_intro}
  \end{equation}
with the exact solution $X(t) = \sinh{(t+\sigma W(t))}$, with $\sigma=0.8$.
  The solution is approximated on the interval $[0,1]$ with step sizes $2^{-12}-2^{-5}$ and the sample
  average of $M=10,000$ independent simulated realizations of the absolute error is calculated in order
  to estimate the expectation. The results at $t=1$ are presented in Figure \ref{fig:gauss_conv}.
\end{example}
The outline of this paper is as follows: In section \ref{sec:prelim} some results on convergence and consistency are recalled.
In section \ref{sec:Bseries}, we will show that the B-series of the exact and the numerical solution are exactly
as in the ODE case \cite{butcher08nmf}, with {the exception that} integration is now performed with
respect to $\mu$ instead of $h$. This is due to the following lemma:
\begin{lemma}\label{th:integration}For all $h\geq0$ it holds
  \[
    \int_{0}^{h}\M^k\circ\dM=\frac1{k+1}\M(h)^{k+1}.
  \]
\end{lemma}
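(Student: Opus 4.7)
The plan is to reduce this to an immediate application of the Stratonovich chain rule, exploiting the fact that $\M$ is a continuous semimartingale and that Stratonovich integration, unlike It\^o integration, obeys the ordinary rules of calculus. This is precisely the property the authors emphasized in the abstract as the source of the ODE-like B-series expansion.

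First I would introduce the smooth antiderivative $F(x):=x^{k+1}/(k+1)$, which satisfies $F'(x)=x^k$. Since $\M(s)=\lambda s+\sigma(W(t+s)-W(t))$ is a continuous semimartingale, the Stratonovich change-of-variables formula gives
\[
  F(\M(h))-F(\M(0))=\int_0^h F'(\M(s))\circ\dM(s)=\int_0^h \M(s)^k\circ\dM(s).
\]
Next I would observe that $\M(0)=\Mpure(t)-\Mpure(t)=0$ by the very definition of $\M$ given just after \eqref{eq:SRK}, so $F(\M(0))=0$ and the identity $\int_0^h \M^k\circ\dM=\M(h)^{k+1}/(k+1)$ follows immediately.

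There is no real obstacle here: the only point that deserves a brief comment is the appeal to the Stratonovich chain rule, which is standard but worth citing (for example to \cite{kloeden99nso}) because it is the crucial structural feature distinguishing this setting from the It\^o case, where an additional quadratic-variation correction term would spoil the clean identity. Everything else is formal.
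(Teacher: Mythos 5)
Your proposal is correct and follows essentially the same route as the paper's own proof, which likewise invokes the Stratonovich change-of-variables formula for the continuous semimartingale $\M$ with the test function $x\mapsto x^{k+1}$; your only cosmetic difference is normalizing the antiderivative up front and making $\M(0)=0$ explicit. Nothing further is needed.
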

To deduce the mean square and the weak order of the approximations, the following lemma
is pivotal:
\begin{lemma}\label{th:mumoments}
  For $n\in\N$ it holds that $\E\M(h)^n=\begin{cases}
    \bO(h^{\frac{n}2})&:\text{ if $n$ is even},\\
    \bO(h^{\frac{n+1}2})&:\text{ if $n$ is odd}.
  \end{cases}$
\end{lemma}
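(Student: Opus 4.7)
The plan is to reduce the claim to a standard Gaussian moment computation. Since $\M(h)=\lambda h+\sigma\DW$ with $\DW:=W(t+h)-W(t)\sim\mathcal{N}(0,h)$, the stationarity of Wiener increments makes $\M(h)$ distributed as $\mathcal{N}(\lambda h,\sigma^2 h)$ independently of $t$. Applying the binomial theorem, I would write
\begin{equation*}
\E\M(h)^n = \sum_{k=0}^{n}\binom{n}{k}\lambda^{n-k}\sigma^k h^{n-k}\,\E[(\DW)^k].
\end{equation*}

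Next I would substitute the well-known formulas $\E[(\DW)^{2j+1}]=0$ and $\E[(\DW)^{2j}]=(2j-1)!!\,h^j$. Only terms with $k=2j$ survive, each contributing a factor of the form $h^{n-2j}\cdot h^j=h^{n-j}$. The dominant (smallest) power of $h$ as $h\to 0$ is thus attained by taking $j$ as large as possible subject to $2j\le n$, namely $j=\lfloor n/2\rfloor$, producing the exponent $n-\lfloor n/2\rfloor=\lceil n/2\rceil$. This equals $n/2$ when $n$ is even and $(n+1)/2$ when $n$ is odd, which is exactly the claim.

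The degenerate case $\lambda=0$ is even more immediate: only the term $k=n$ survives, giving either $0$ (for odd $n$) or $\sigma^n(n-1)!!\,h^{n/2}$ (for even $n$), both of which are consistent with (in fact, sharper than) the stated bound. There is no real obstacle to overcome — the whole argument is a one-line binomial expansion followed by standard Gaussian moment identities; the only care needed is in identifying which term in the finite sum dominates the asymptotics as $h\to 0$.
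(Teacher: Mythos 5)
Your proof is correct and follows essentially the same route as the paper: a binomial expansion of $\E\M(h)^n$ in powers of $\lambda h$ and $\sigma\,(W(t+h)-W(t))$, followed by the standard Gaussian moment identities and identification of the lowest surviving power of $h$. The paper's proof is just a more compressed version of the same two-line argument.
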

The
proofs of \cref{th:main,th:integration,th:mumoments} are given in section \ref{sec:mainproof}. Finally, more numerical experiments
justifying the theoretical results are given in section \ref{sec:numexp}.
\section{Convergence and consistency}\label{sec:prelim}
Here we will give the definitions of both weak and strong convergence
and results which relate convergence to consistency.

Let $C_P^l(\real^d, \real^{\hat{d}})$ denote the space of all $g \in
C^l(\real^d,\real^{\hat{d}})$ fulfilling a polynomial growth condition \cite{kloeden99nso} and $\Ih$ be the discretized time interval defined above.
\begin{definition}
  A time discrete approximation $Y=(Y(t))_{t \in {\Ih}}$ converges weakly with order $p$ to $X$ at time $t \in {\Ih}$ as {the maximum step size}
  $\hmax\rightarrow 0$ if for each $g \in
  C_P^{2(p+1)}(\real^d, \real)$ there exist a constant $C_g$
  and a finite $\delta_0 > 0$ such that
  \begin{equation*}
    | \E(g(Y(t))) - \E(g(X(t))) | \leq C_g \, \hmax^p
  \end{equation*}
holds for each $\hmax\in \, ]0,\delta_0[\,$.
\end{definition}
Now, let $le_g(h;t,x)$ be the weak local error of the method starting
at the point $(t,x)$ with respect to the functional $g$ and step size
$h$, i.\,e.
\[
  le_g(h;t,x)= \E \big(g(Y(t+h))-g(X(t+h))|Y(t)=X(t)=x\big).
\]
The following theorem due to Milstein \cite{milstein95nio}, which
holds also in the case of general one step methods, shows that, as in
the deterministic case, consistency implies convergence:\pagebreak[2]
\begin{theorem}\label{th:weakconsimplconv}
  Suppose the following conditions hold:
  \begin{itemize}
    \item \label{St-lg-cond1} The {integrand} $f$ {of \eqref{eq:sde_int}} is differentiable, and $f$ and $f'f$ satisfy a Lipschitz condition and belong to $C_P^{2(p+1)}(\real^d,
        \real^d)$.
    \item \label{St-lg-cond2} For sufficiently large $r$ (see, e.g.,
      \cite{milstein95nio} for details) the moments
      $\E(\|Y(t_n)\|^{2r})$ exist for $t_n \in {\Ih}$ and are
      uniformly bounded with respect to $N$ and $n=0,1, \ldots, N$.
    \item \label{St-lg-cond3} For all $g\in
      C_P^{2(p+1)}(\real^d, \real)$ there exists a $K \in
      C_P^0(\real^d, \real)$ such that
      \begin{eqnarray*}
	|le_g(h;t,x)| &\leq& K(x) \, h^{p+1}
      \end{eqnarray*}
      is valid for $x \in \real^d$ and $t, t+h \in I$, i.\,e., the
      approximation is weak consistent of order $p$.
  \end{itemize}
  Then the method \eqref{eq:SRK} is convergent of order $p$ in the
  sense of weak approximation.
\end{theorem}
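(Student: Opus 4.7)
The plan is to follow the classical Kolmogorov/telescoping strategy that underlies Milstein's theorem: introduce the backward solution
\[
u(t,x) := \E\bigl(g(X(T)) \mid X(t)=x\bigr),
\]
and express the weak global error as a telescoping sum over the grid $\Ih$ whose terms are precisely local weak errors controlled by the consistency hypothesis. Concretely, I would write
\[
\E(g(Y(T))) - \E(g(X(T))) = \sum_{n=0}^{N-1}\Bigl[\E\, u(t_{n+1},Y(t_{n+1})) - \E\, u(t_n,Y(t_n))\Bigr],
\]
using that $u(t_0,x_0) = \E(g(X(T)))$ and $u(T,x)=g(x)$. By the tower property applied to the exact flow started from $(t_n, Y(t_n))$, each summand equals the expectation of the local weak error of the scheme on $[t_n,t_{n+1}]$ with respect to the test functional $u(t_{n+1},\cdot)$.

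Next, I would check that the local consistency bound can be applied with $g$ replaced by $u(t_{n+1},\cdot)$. This requires verifying that $u(t,\cdot)$ inherits the regularity $C_P^{2(p+1)}(\real^d,\real)$ uniformly in $t\in I$. Under the assumed differentiability and Lipschitz conditions on $f$ and $f'f$, this follows from the standard analysis of the variational equations associated to \eqref{eq:sde_int}: the derivatives $\partial_x^\alpha X_{t,x}(T)$ exist up to order $2(p+1)$, satisfy linear SDEs with polynomially bounded coefficients, and hence have moments of all orders bounded polynomially in $x$. Differentiating under the expectation therefore yields $u(t,\cdot)\in C_P^{2(p+1)}$ with bounds uniform in $t$.

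With that regularity in hand, the local error hypothesis supplies a function $\tilde K \in C_P^0$ (depending on the bounds on derivatives of $u$, hence independent of $n$) such that
\[
\bigl|\E\,u(t_{n+1},Y(t_{n+1})) - \E\,u(t_n,Y(t_n))\bigr| \leq \E\bigl(\tilde K(Y(t_n))\bigr)\, h_n^{p+1}.
\]
The uniform moment bound on $Y$ combined with the polynomial growth of $\tilde K$ gives $\E(\tilde K(Y(t_n))) \leq C$ independently of $n$ and $N$. Summing the telescoping estimate over $n$ then produces
\[
|\E(g(Y(T))) - \E(g(X(T)))| \leq C \sum_{n=0}^{N-1} h_n^{p+1} \leq C \, (T-t_0)\, \hmax^{p}.
\]

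The main obstacle in turning this sketch into a complete proof is the regularity step for $u$: one must justify differentiating through the expectation $2(p+1)$ times while keeping track of polynomial growth in $x$ uniformly on $[t_0,T]$. This is routine under the standing Lipschitz and polynomial growth assumptions on $f$ and $f'f$, but it is the technically delicate ingredient; everything else reduces to the telescoping identity and a careful bookkeeping of constants, which is why we may appeal to Milstein's original argument rather than redoing it in detail.
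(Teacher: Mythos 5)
The paper does not prove this theorem at all: it is quoted as a known result of Milstein and the proof is delegated entirely to the citation \cite{milstein95nio}. Your telescoping argument with the backward function $u(t,x)=\E(g(X(T))\mid X(t)=x)$ is precisely the classical proof behind that citation, and the outline is correct: the telescoping identity, the identification of each summand as a local weak error with test functional $u(t_{n+1},\cdot)$ via the Markov property, and the final summation $\sum_n h_n^{p+1}\le(T-t_0)\hmax^p$ are all right. The one point you correctly flag as delicate is also the only real subtlety: the consistency hypothesis as literally stated provides a $K$ for each fixed $g$, whereas you need a single $\tilde K$ valid uniformly over the family $\{u(t,\cdot)\}_{t\in I}$; this is resolved, as you indicate, by showing $u(t,\cdot)\in C_P^{2(p+1)}$ with bounds uniform in $t$ and by the fact that in Milstein's derivation $K$ depends on $g$ only through such derivative and growth bounds. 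So your proposal is a faithful reconstruction of the argument the paper chose to import rather than reprove.
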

Whereas weak approximation methods are used to estimate the expectation of functionals of the solution, strong approximation methods approach the solution
path-wise.
\begin{definition}
  A time discrete approximation $Y=(Y(t))_{t \in {\Ih}}$ converges
  strongly respectively in the mean square with order $p$ to $X$ at time $t \in {\Ih}$ as
  {the maximum step size} $\hmax\rightarrow 0$ if there {exist} a constant
  $C$
  and a finite $\delta_0 > 0$ such that
  \begin{equation*}
    \E\|Y(t) - X(t)\|\leq C \, \hmax^p
    \qquad
    \text{respectively}
    \qquad
    \sqrt{\E(\|Y(t) - X(t)\|^2)}\leq C \, \hmax^p
  \end{equation*}
holds for each $\hmax \in \, ]0,\delta_0[\,$.
\end{definition}
In this article we will consider convergence in the mean
square sense. But by Jensen's inequality we have
\[
  (\E\|Y(t) - X(t)\|)^2\leq\E(\|Y(t) - X(t)\|^2),
\]
so mean square convergence implies strong convergence of the same order.

Now, let $le^m(h;t,x)$ respectively $le^{ms}(h;t,x)$ be the mean
respectively mean square local error of the method starting at the
point $(t,x)$ with respect to the step size $h$, i.\,e.
\begin{eqnarray*}
  le^m(h;t,x)&=& \E \big(Y(t+h)-X(t+h)|Y(t)=X(t)=x\big),\\
  le^{ms}(h;t,x)&=&\sqrt{\E \big((Y(t+h)-X(t+h))^2|Y(t)=X(t)=x\big)}.
\end{eqnarray*}
The following theorem due to Milstein \cite{milstein95nio}, which holds
also in the case of general one step methods, shows that in the mean
square convergence case we obtain order $p$ if the mean local error is
consistent of order $p$ and the mean square local error is consistent
of order $p-\frac12$.
\begin{theorem}\label{th:strongconsimplconv}
  Suppose the following conditions hold:
  \begin{itemize}
    \item {The} {integrand} $f$ {of \eqref{eq:sde_int}} is differentiable, and $f$ and $f'f$ satisfy a Lipschitz condition.
    \item There exists a constant $K$ independent of $h$
      such that
      \[
	\|le^m(h;t,x)\|\leq K\sqrt{1+\|x\|^2}\,h^{p_1},\qquad
	le^{ms}(h;t,x) \leq K\sqrt{1+\|x\|^2}\, h^{p+\frac12}
      \]
      with $p\geq0$, $p_1\geq p+1$ is valid for $x \in \real^d$ and
      $t, t+h \in {I}$, i.\,e., the approximation is consistent in
      the mean of order $p_1-1\geq p$ and in the mean square of
      order $p-\frac12$.
  \end{itemize}
  Then the SRK method (\ref{eq:SRK}) is convergent of order $p$ in
  the sense of mean square approximation.
\end{theorem}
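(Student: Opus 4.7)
The approach is the classical Milstein transfer from local to global mean-square error (\cite{milstein95nio}). Let $\varepsilon_n := Y(t_n)-X(t_n)$ with $\varepsilon_0=0$. For each step, compare the numerical update to the exact Stratonovich flow $X_{t_n,y}$ started at $(t_n,y)$, and write
\[
  \varepsilon_{n+1} = \bigl(X_{t_n,Y(t_n)}(t_{n+1}) - X_{t_n,X(t_n)}(t_{n+1})\bigr) + L_n, \qquad L_n := Y(t_{n+1}) - X_{t_n,Y(t_n)}(t_{n+1}),
\]
i.e., as the propagation of the previous error through the exact flow plus the new local defect. Decompose $L_n = m_n + r_n$ with $m_n := \E[L_n\mid\mathcal F_{t_n}]$ and $r_n := L_n - m_n$ conditionally mean-zero. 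The two hypotheses translate to $\|m_n\|\leq K\sqrt{1+\|Y(t_n)\|^2}\,h_n^{p_1}$ and $\E[\|r_n\|^2\mid\mathcal F_{t_n}]\leq K^2(1+\|Y(t_n)\|^2)\,h_n^{2p+1}$.

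Squaring, taking expectations, invoking the $L^2$-Lipschitz stability of the exact flow (which follows from the Lipschitz bounds on $f$ and $f'f$ after rewriting \eqref{eq:sde_int} in It\^o form), Young's inequality on the cross term involving $m_n$, and the orthogonality $\E\langle Z,r_n\rangle=0$ for every $\mathcal F_{t_n}$-measurable $Z$, yields the one-step recursion
\[
  \E\|\varepsilon_{n+1}\|^2 \leq (1+C h_n)\,\E\|\varepsilon_n\|^2 + \tfrac{1}{h_n}\E\|m_n\|^2 + \E\|r_n\|^2.
\]
The discrete Gronwall inequality then gives
\[
  \E\|\varepsilon_N\|^2 \leq e^{CT}\sum_{n=0}^{N-1}K^2\bigl(1+\E\|Y(t_n)\|^2\bigr)\bigl(h_n^{2p_1-1}+h_n^{2p+1}\bigr),
\]
which, bounding $h_n^{2p_1-1}\leq\hmax^{2p_1-2}h_n$ and $h_n^{2p+1}\leq\hmax^{2p}h_n$ and using $\sum h_n\leq T$, is $\bO(\hmax^{2p_1-2})+\bO(\hmax^{2p})=\bO(\hmax^{2p})$ thanks to the assumption $p_1\geq p+1$. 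Taking square roots delivers the required mean-square order $p$.

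The main obstacle will be the auxiliary bound $\sup_n\E\|Y(t_n)\|^2<\infty$ uniformly in $\hmax$, which enters the summand above and is not an immediate consequence of the local-error hypotheses. It has to be obtained by a separate induction on $n$ that exploits the Lipschitz bound on $f$, well-posedness of the implicit stage system \eqref{rk:a} for $\hmax$ smaller than some $\delta_0$, and the moment estimates of \cref{th:mumoments} applied to the increments $\DM{t_n,h_n}$. A secondary technicality is the Stratonovich-to-It\^o conversion and the associated $L^2$-stability of $X_{t,y}$, both of which are standard under the assumed regularity of $f$ and $f'f$ (cf.\ \cite{kloeden99nso}). Once these two ingredients are in hand, the recursion and Gronwall argument above close the proof; a completely detailed version is given by Milstein \cite{milstein95nio}.
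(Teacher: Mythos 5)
Note first that the paper does not actually prove this theorem: it is quoted from Milstein \cite{milstein95nio}, so your proposal must be measured against Milstein's classical ``fundamental theorem'' argument. In outline you follow exactly that argument (error $=$ propagation of the old error through the exact flow plus a new local defect, bias/fluctuation splitting of the defect, discrete Gronwall), which is the right --- and essentially the only --- approach.

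There is, however, a genuine gap at precisely the delicate point of that argument. Write $S_n := X_{t_n,Y(t_n)}(t_{n+1}) - X_{t_n,X(t_n)}(t_{n+1})$ for the propagated term in your decomposition. You dispose of the cross term $\E\langle S_n, r_n\rangle$ by invoking the orthogonality $\E\langle Z, r_n\rangle = 0$ for $\mathcal{F}_{t_n}$-measurable $Z$; but $S_n$ is \emph{not} $\mathcal{F}_{t_n}$-measurable: it is built from the exact flow over $[t_n,t_{n+1}]$ and therefore depends on the very Wiener increments that drive $r_n$, so this cross term does not vanish in general, and your one-step recursion does not follow from the three ingredients you list. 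The repair --- which is the heart of Milstein's proof --- is to split $S_n = \varepsilon_n + (S_n - \varepsilon_n)$: the term $\E\langle \varepsilon_n, r_n\rangle$ does vanish, since $\varepsilon_n$ is $\mathcal{F}_{t_n}$-measurable, while the remainder requires the additional flow estimate $\E\|S_n-\varepsilon_n\|^2 \leq C h_n \E\|\varepsilon_n\|^2$ (flow differences deviate from the difference of their initial values by $\bO(\sqrt{h_n})$ in $L^2$, a consequence of the Lipschitz assumptions after Stratonovich-to-It\^{o} conversion); Cauchy--Schwarz and Young then give $2\E\langle S_n-\varepsilon_n, r_n\rangle \leq C h_n \E\|\varepsilon_n\|^2 + \E\|r_n\|^2$, which preserves the form of your recursion, after which your exponent bookkeeping ($2p_1-2\geq 2p$) and the Gronwall step are correct. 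Incidentally, the moment bound you flag as ``the main obstacle'' needs no separate induction: since $\E\|Y(t_n)\|^2 \leq 2\E\|\varepsilon_n\|^2 + 2\E\|X(t_n)\|^2$ and the exact solution has bounded second moments under the stated Lipschitz conditions, it can be absorbed directly into the same recursion before Gronwall is applied.
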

\section{B-series and rooted trees}\label{sec:Bseries}
{In order to apply  \cref{th:weakconsimplconv,th:strongconsimplconv} we will now use B--series and rooted tree theory to study the
  order of the local errors of the method \eqref{eq:SRK}.}  B-series for deterministic ODEs were introduced by Butcher \cite{butcher63cft}. Today such series appear as a fundamental tool to do local error analysis on a
wide range of problems. B-series for SDEs and their numerical solution by stochastic Runge--Kutta methods have been developed by Burrage and Burrage
\cite{burrage96hso,burrage00oco} to study strong convergence in the Stratonovich case, by Komori, Mitsui and Sugiura \cite{komori97rta} and Komori
\cite{komori07mrt} to study weak convergence in the Stratonovich case and by R\"{o}{\ss}ler \cite{roessler04ste,roessler06rta} to study weak convergence in both the
It\^{o} and the Stratonovich case. However, the distinction between the It\^{o} and the Stratonovich integrals
only depends on the definition of the integrals, not on how the B-series are constructed. Similarly, the distinction between weak and strong convergence only
depends on the definition of the local error. A uniform and self-contained theory for the construction of stochastic B-series for the exact solution of SDEs
and its numerical approximation by stochastic Runge--Kutta methods is given in \cite{debrabant08bao}. Based on the notation used there, we will now derive the
B-series for the exact solution and numerical approximation of single-integrand SDEs. Due to the single integrand we will, similar to the ODE case \cite{butcher08nmf}, only need non-colored trees in the expansion of the solution.

\begin{definition}[Trees]
  The set of rooted trees $T$ is recursively defined as follows:
  \begin{description}
    \item[a)] The empty tree $\emptyset$ and the graph $\bullet=[\emptyset]$ with only one vertex {belong} to $T$.
  \end{description}
  Let $\tau=[\tau_1,\tau_2,{\dots},\tau_{\kappa}]$ be the tree
  formed by joining the subtrees
  $\tau_1,\tau_2,{\dots},\tau_{\kappa}$ each by a single branch to a
  common root.
  \begin{description}
    \item[b)] If $\tau_1,\tau_2,{\dots},\tau_{\kappa} \in T$ then
      $\tau=[\tau_1,\tau_2,{\dots},\tau_{\kappa}] \in T$.
  \end{description}
\end{definition}

\begin{definition}[Elementary differentials]
  For a tree $\tau \in T$ the elementary differential is
  a mapping $F(\tau):\real^d \rightarrow \real^d$ defined
  recursively by
  \begin{description}
    \item[a)] $F(\emptyset)(\xvar)=\xvar$, \\ \mbox{}
    \item[b)] $F(\bullet_l)(\xvar)=f(\xvar)$, \\ \mbox{}
    \item[c)] If $\tau=[\tau_1,\tau_2,{\dots},\tau_{\kappa}] \in T\setminus\{\emptyset\}$
      then
      \[
	F(\tau)(\xvar)=f^{(\kappa)}(\xvar)
      \big(F(\tau_1)(\xvar),F(\tau_2)(\xvar),{\dots},F(\tau_{\kappa})(\xvar)\big)\]
  \end{description}
{where $\xvar\in\R^d$.}
\end{definition}

\begin{definition}[B-series]
  Consider a family $\{\phi(\tau)\}_{\tau\in T}$ of random variables satisfying
  \[
    \phi(\emptyset)\equiv1 \;\text{ and }\; \phi(\tau)(0)=0,\quad
    \forall \tau\in T \backslash \{\emptyset\}.
  \]
  A (stochastic) B-series is then
  a formal series of the form
  \[
    B(\phi,\xvar; h) = \sum_{\tau \in T}
    \alpha(\tau)\cdot\phi(\tau)(h)\cdot F(\tau)(\xvar),
  \]
  where
  $\alpha: T\rightarrow \mathbb{Q}$ is given by
  \begin{align*}
    \alpha(\emptyset)&=1,&\alpha(\bullet)&=1,
    &\alpha(\tau=[\tau_1,\cdots,\tau_{\kappa}])&=
    \frac{1}{r_1!r_2!\cdots r_{q}! } \prod_{j=1}^{\kappa} \alpha(\tau_j),
  \end{align*}
  where $r_1,r_2,{\dots},r_{q}$ count equal trees among
  $\tau_1,\tau_2,{\dots},\tau_{\kappa}$.
\end{definition}

The next lemma proves that if $Y({t+}h)$ can be written as a B-series,
then $g(Y({t+}h))$ can be written as a similar series, where the sum is
taken over trees with a root of color $g$ and subtrees in $T$.  The
lemma is fundamental for deriving the B-series of the exact and the
numerical solution. It will also be used for deriving weak convergence
results.
\begin{lemma}[\cite{debrabant08bao}] \label{lem:f_y} If $Y({t+}h)=B(\phi, \xvar; h)$ is some
  B-series and $g\in C^{\infty}(\real^d,\real^{\hat{d}})$ then
  $g(Y({t+}h))$ can be written as a formal series of the form
  \begin{equation}
    \label{eq:f}
    g(Y({t+}h))=\sum_{u\in U_g} \beta(u)\cdot \psi_\phi(u)(h)\cdot G(u)(\xvar),
  \end{equation}
  where $U_g$ is a set of trees derived from $T$, by
  \begin{description}
    \item[a)] $[\emptyset]_g \in U_g$, and if
      $\tau_1,\tau_2,{\dots},\tau_{\kappa}\in T\setminus\{\emptyset\}$ then
      $[\tau_1,\tau_2,{\dots},\tau_{\kappa}]_g\in U_g$. \\ \mbox{}
    \item[b)] $G([\emptyset]_g)(\xvar)=g(\xvar)$ and \\
      $\mbox{}\;\, G(u=[\tau_1,{\dots},\tau_{\kappa}]_g)(\xvar) =
      g^{(\kappa)}(\xvar)
      \big(F(\tau_1)(\xvar),{\dots},F(\tau_{\kappa})(\xvar)\big)$. \\
      \mbox{}
    \item[c)] $\beta([\emptyset]_g)=1$ and $\D
      \beta(u=[\tau_1,{\dots},\tau_{\kappa}]_g)
      =\frac{1}{r_1!r_2!\cdots r_{q}!}\prod_{j=1}^{\kappa}
      \alpha(\tau_{{j}})$, \\where $r_1,r_2,{\dots},r_{q}$ count
      equal trees among $\tau_1,\tau_2,{\dots},\tau_{\kappa}$. \\
      \mbox{}
    \item[d)] $\psi_\phi([\emptyset]_g)\equiv1$ and
      $\psi_\phi(u=[\tau_1,{\dots},\tau_{\kappa}]_g)(h) =
      \prod_{j=1}^{\kappa} \phi(\tau_j)(h)$.
  \end{description}
\end{lemma}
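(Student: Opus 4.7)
The plan is to derive the expression for $g(Y(t+h))$ by Taylor expansion around $x$ and then substitution of the B-series. First I would split off the empty-tree contribution of the B-series: since $F(\emptyset)(x)=x$, $\phi(\emptyset)\equiv 1$ and $\alpha(\emptyset)=1$, we may write
\[
  Y(t+h) - x \;=\; \sum_{\tau \in T\setminus\{\emptyset\}} \alpha(\tau)\,\phi(\tau)(h)\,F(\tau)(x) \;=:\; \Delta(h),
\]
which is a formal series in the non-empty trees vanishing at $h=0$. Then I would formally Taylor expand
\[
  g(Y(t+h)) \;=\; g(x+\Delta(h)) \;=\; \sum_{\kappa=0}^{\infty} \frac{1}{\kappa!}\, g^{(\kappa)}(x)\bigl(\Delta(h),\ldots,\Delta(h)\bigr),
\]
which is justified at the level of formal power series in the $\phi(\tau)(h)$.

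The next step is to exploit multilinearity of $g^{(\kappa)}(x)$ to distribute it over the sums defining the $\kappa$ copies of $\Delta(h)$, which yields, for each $\kappa\geq 1$, a sum over all ordered tuples $(\tau_1,\ldots,\tau_\kappa)\in (T\setminus\{\emptyset\})^\kappa$ of
\[
  \frac{1}{\kappa!}\Bigl(\prod_{j=1}^{\kappa} \alpha(\tau_j)\,\phi(\tau_j)(h)\Bigr) \,g^{(\kappa)}(x)\bigl(F(\tau_1)(x),\ldots,F(\tau_\kappa)(x)\bigr).
\]
By definition (a) of $U_g$, unordered multisets $\{\tau_1,\ldots,\tau_\kappa\}$ with all $\tau_j\neq\emptyset$ are exactly in bijection with trees $u=[\tau_1,\ldots,\tau_\kappa]_g\in U_g$ having $\kappa\geq 1$ children. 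Each such multiset arises from exactly $\kappa!/(r_1!\cdots r_q!)$ ordered tuples, where $r_1,\ldots,r_q$ count repeated children. Collecting ordered tuples by their underlying multiset therefore produces the coefficient
\[
  \frac{1}{\kappa!}\cdot\frac{\kappa!}{r_1!\cdots r_q!}\prod_{j=1}^{\kappa}\alpha(\tau_j)
  \;=\; \frac{1}{r_1!\cdots r_q!}\prod_{j=1}^{\kappa}\alpha(\tau_j)\;=\;\beta(u),
\]
while the remaining factors combine into $\psi_\phi(u)(h) = \prod_{j=1}^{\kappa}\phi(\tau_j)(h)$ and $G(u)(x)=g^{(\kappa)}(x)(F(\tau_1)(x),\ldots,F(\tau_\kappa)(x))$, exactly as in (b)--(d). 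The term $\kappa=0$ of the Taylor expansion contributes $g(x)$, which by (b)--(d) is precisely the $u=[\emptyset]_g$ summand with $\beta([\emptyset]_g)=1$ and $\psi_\phi([\emptyset]_g)\equiv 1$. Summing over all $\kappa\geq 0$ thus yields formula \eqref{eq:f}.

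The main obstacle is the combinatorial bookkeeping in the reindexing step: one must verify that the $1/\kappa!$ from Taylor expansion is exactly absorbed by the multinomial factor counting orderings of an unordered multiset of subtrees, leaving behind precisely $\beta(u)$ as defined in (c). A minor technical point is the justification of manipulating $g(x+\Delta(h))$ as a formal series; since $\Delta(h)$ is a formal sum of monomials in the indeterminates $\phi(\tau)(h)$ all of which vanish at $h=0$, every monomial in the final series receives only finitely many contributions, so the expansion is well-defined without any analytic convergence argument.
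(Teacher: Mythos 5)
Your argument is correct and is essentially the standard proof of this lemma: the paper itself gives no proof here but defers to \cite{debrabant08bao}, where the result is established by exactly this route --- formal Taylor expansion of $g$ about $x$, multilinear distribution of $g^{(\kappa)}(x)$ over the non-empty-tree part of the B-series, and reindexing ordered tuples by their underlying multisets so that the $1/\kappa!$ is absorbed into the factor $\kappa!/(r_1!\cdots r_q!)$, leaving $\beta(u)$. Your closing remarks on the combinatorial bookkeeping and on well-definedness as a formal series (using $\phi(\tau)(0)=0$ for $\tau\neq\emptyset$) address the only delicate points, so nothing is missing.
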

We are now able to derive the B-series of the exact solution. Here and in the following,
$\rh{\tau}$ denotes the number of nodes in a tree $\tau$.

\begin{theorem}\label{th:BseriesexactsolutionODE}
  Let
  $\gamma: T\rightarrow\N$ be given by
  \begin{gather*}
    \gamma(\emptyset)=1,\qquad\gamma(\bullet)=1,\\
    \gamma([\tau_1,\dots,\tau_{\kappa}])=\rh{[\tau_1,\dots,\tau_{\kappa}]}\prod_{j=1}^{\kappa} \gamma(\tau_j).
  \end{gather*}
  Then the solution $X({t+}h)$ of \eqref{eq:sde} {starting at the
point $(t,\xvar)$} can be written as a B-series $B(\varphit,\xvar; h)$ with
  \begin{gather}\label{eq:explweightexsol}
    \varphit(\tau)(h)=\frac{\M(h)^{\rh{\tau}}}{\gamma(\tau)}\text{ for }\tau\in T.
  \end{gather}
\end{theorem}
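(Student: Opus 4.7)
The plan is to mimic Butcher's classical derivation of the B-series for the exact solution of an ODE, with the time integral replaced by a Stratonovich integral against $\mu$. The proof proceeds by induction on $\rho(\tau)$, using the integral form \eqref{eq:sde_int}. Assume formally that $X(t+s)=B(\varphi_t,\xvar;s)$ for some family $\{\varphi_t(\tau)\}_{\tau\in T}$ of random variables to be determined. Then I would apply \cref{lem:f_y} with $g=f$ to obtain
\[
  f(X(t+s))=\sum_{u\in U_f}\beta(u)\cdot\psi_{\varphi_t}(u)(s)\cdot G(u)(\xvar),
\]
and substitute this expansion into the integral equation $X(t+h)=\xvar+\int_0^h f(X(t+s))\circ\dMpure_t(s)$, integrating termwise.

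Next I would identify the set $U_f$ of $f$-rooted trees with $T\setminus\{\emptyset\}$ via the natural bijection $[\emptyset]_f\leftrightarrow\bullet$ and $[\tau_1,\dots,\tau_{\kappa}]_f\leftrightarrow[\tau_1,\dots,\tau_{\kappa}]$. Under this identification $G(u)(\xvar)=F(\tau)(\xvar)$ and, comparing the definitions of $\alpha$ and $\beta$, one has $\beta(u)=\alpha(\tau)$. For $\tau=[\tau_1,\dots,\tau_{\kappa}]$ with the inductive hypothesis $\varphi_t(\tau_j)(s)=\M(s)^{\rh{\tau_j}}/\gamma(\tau_j)$, the weight becomes
\[
  \psi_{\varphi_t}(u)(s)=\prod_{j=1}^{\kappa}\varphi_t(\tau_j)(s)=\frac{\M(s)^{\rh{\tau}-1}}{\prod_{j=1}^{\kappa}\gamma(\tau_j)},
\]
using $\rh{\tau}=1+\sum_j\rh{\tau_j}$. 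Applying \cref{th:integration} with $k=\rh{\tau}-1$ yields
\[
  \int_0^h\psi_{\varphi_t}(u)(s)\circ\dM=\frac{\M(h)^{\rh{\tau}}}{\rh{\tau}\,\prod_{j=1}^{\kappa}\gamma(\tau_j)}=\frac{\M(h)^{\rh{\tau}}}{\gamma(\tau)},
\]
by the defining recursion of $\gamma$. The base case $\tau=\bullet$ is immediate from $\int_0^h 1\circ\dM=\M(h)$ together with $\gamma(\bullet)=\rh{\bullet}=1$.

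Matching the coefficient of $F(\tau)(\xvar)=G(u)(\xvar)$ then gives exactly $\alpha(\tau)\cdot\M(h)^{\rh{\tau}}/\gamma(\tau)$, i.e.\ \eqref{eq:explweightexsol}. The main obstacle I anticipate is the bookkeeping needed to verify that the combinatorial factor $\beta(u)$ coming from \cref{lem:f_y} coincides with $\alpha(\tau)$ under the $U_f\leftrightarrow T\setminus\{\emptyset\}$ identification and that the extra factor $\rh{\tau}$ produced by \cref{th:integration} is exactly the one inserted into the recursion for $\gamma$; apart from this, the argument is a direct Stratonovich analogue of the deterministic case, since \cref{th:integration} supplies the single fact that was used in the ODE proof (namely $\int_0^h s^k\,\mathrm{d}s=h^{k+1}/(k+1)$).
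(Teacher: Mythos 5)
Your proposal is correct and follows essentially the same route as the paper: write the solution as a B-series, apply \cref{lem:f_y} with $g=f$ to the integral form \eqref{eq:sde_int}, match coefficients to obtain the recursion $\varphit(\tau)(h)=\int_0^h\prod_j\varphit(\tau_j)(s)\circ\dM(s)$, and close the induction with \cref{th:integration}. The only (immaterial) differences are that you induct on $\rh{\tau}$ rather than on the height of $\tau$, and that you spell out the bookkeeping identification $U_f\leftrightarrow T\setminus\{\emptyset\}$ with $\beta(u)=\alpha(\tau)$ and $G(u)=F(\tau)$, which the paper compresses into the remark that $U_g=T$ in this case.
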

\begin{proof}
  Write the exact solution as some B-series
  $X({t+}h)=B(\varphit,\xvar;h)$. By applying \cref{lem:f_y} to $f$ (in which case $U_g=T$) the integral
  form \eqref{eq:sde_int} of the SDE can be written as
  \[
    \sum_{\tau\in T} \alpha(\tau) \cdot \varphit(\tau)(h) \cdot
    F(\tau)(\xvar) = \xvar+ \sum_{\tau \in T} \alpha(\tau) \cdot \int_0^h
    \prod_{j=1}^\kappa \varphit(\tau_j)(s)\circ \dM(s) \cdot F(\tau)(\xvar).
  \]
  By comparing term by term we get
  \begin{gather*}
    \varphit(\emptyset)\equiv 1, \quad \varphit(\bullet)(h)=\M(h), \\
    \quad \varphit(\tau)(h) = \int_0^h \prod_{j=1}^{\kappa}\varphit(\tau_j)(s)\circ \dM(s) \quad
    \text{for}\quad \tau=[\tau_1,\dots,\tau_{\kappa}] \in T.
  \end{gather*}
  This proves the theorem for $\tau=\emptyset$ and $\tau=\bullet$. The rest is proved by induction
  on the height of $\tau$.  If
  $\tau=[\tau_1,\dots,\tau_{\kappa}]$ then
  \begin{multline*}
    \varphit(\tau)(h)=\int_0^h\prod_{j=1}^{\kappa}\varphit(\tau_j)(s)\circ \dM(s)
    =\int_0^h\prod_{j=1}^{\kappa}\frac{\M(s)^{\rh{\tau_j}}}{\gamma(\tau_j)}\circ \dM(s)\\
    \teq{\cref{th:integration}}\frac{\M(h)^{\rh{\tau}}}{\rh{\tau}}\prod_{j=1}^{\kappa}\frac1{\gamma(\tau_j)}=\frac{\M(h)^{\rh{\tau}}}{\gamma(\tau)}.
  \end{multline*}
  \qed
\end{proof}
For the numerical approximation \eqref{eq:SRK} the following result holds:
\begin{theorem} \label{thm:Bnum} The numerical solution $Y(t+h)$ {after one step with step size $h$ starting at the
point $(t,x)$} as
  well as the {corresponding} stage values $H_i$ can be written in terms of B-series
  \[ H_i = B(\numstageweight_i,\xvar; h), \qquad Y{(t+h)} =
  B(\numweight,\xvar; h)\] with $\numstageweight_i(\tau){(h)}={(}\DM{{t,t+h}}{)}^{\rh{\tau}}\hnumstageweight_i(\tau)$, where
  \begin{subequations} \label{eq:B_iter_H}
    \begin{gather}
      \hnumstageweight_i(\emptyset)=1, \quad
      \hnumstageweight_i(\bullet)=\sum_{j=1}^sa_{ij},\\
      \hnumstageweight_i(\tau)=\sum_{j=1}^sa_{ij}\prod_{k=1}^{\kappa}\hnumstageweight_j(\tau_k)\text{ if }\tau=[\tau_1,\dots,\tau_{\kappa}]
    \end{gather}
  \end{subequations}
  and $\numweight(\tau){(h)}={(}\DM{{t,t+h}}{)}^{\rh{\tau}}\hnumweight(\tau)$, where
  \begin{subequations} \label{eq:B_iter_Y}
    \begin{gather}
      \hnumweight(\emptyset)=1, \quad
      \hnumweight(\bullet)=\sum_{i=1}^sb_i,\\
      \hnumweight([\tau_1,\dots,\tau_{\kappa}])
      =\sum_{i=1}^sb_i\prod_{k=1}^{\kappa}\hnumstageweight_i(\tau_k).
    \end{gather}
  \end{subequations}
\end{theorem}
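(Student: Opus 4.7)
The plan is to follow the classical ODE derivation of B-series for a Runge--Kutta method, with the deterministic step size replaced throughout by the random increment $\DM{t,h}$. I would carry it out in four steps: posit the B-series form of the stages, expand $f(H_j)$ via \cref{lem:f_y}, match coefficients of elementary differentials in the method's defining equation, and finally peel off the $\DM{t,h}$ factor by induction on $\rh{\tau}$.

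First, write $H_i = B(\numstageweight_i,\xvar;h)$ with unknown random weights $\numstageweight_i(\tau)$; by definition of a B-series, $\numstageweight_i(\emptyset) \equiv 1$. Applying \cref{lem:f_y} with $g=f$, the set $U_f$ is in natural bijection with $T\setminus\{\emptyset\}$ via $[\emptyset]_f \leftrightarrow \bullet$ and $[\tau_1,\dots,\tau_\kappa]_f \leftrightarrow [\tau_1,\dots,\tau_\kappa]$; under this bijection one checks directly that $G(u)=F(\tau)$ and $\beta(u)=\alpha(\tau)$. Consequently
\[
f(H_j) = \sum_{\tau\in T\setminus\{\emptyset\}} \alpha(\tau)\,\tilde\psi_{\numstageweight_j}(\tau)(h)\,F(\tau)(\xvar),
\]
with $\tilde\psi_{\numstageweight_j}(\bullet)\equiv 1$ and $\tilde\psi_{\numstageweight_j}([\tau_1,\dots,\tau_\kappa])(h) = \prod_{k=1}^{\kappa} \numstageweight_j(\tau_k)(h)$.

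Substituting this expansion into \eqref{rk:a} and equating coefficients of the elementary differentials $F(\tau)(\xvar)$ yields the recursion
\[
\numstageweight_i(\bullet)(h) = \DM{t,h}\sum_{j=1}^{s} a_{ij}, \qquad
\numstageweight_i([\tau_1,\dots,\tau_\kappa])(h) = \DM{t,h}\sum_{j=1}^{s} a_{ij}\prod_{k=1}^{\kappa} \numstageweight_j(\tau_k)(h).
\]
An induction on $\rh{\tau}$ now gives $\numstageweight_i(\tau)(h) = (\DM{t,h})^{\rh{\tau}}\hnumstageweight_i(\tau)$ with the stated recursion for $\hnumstageweight_i$: each layer of the recursion contributes one factor $\DM{t,h}$, while the sub-tree node counts sum to $\rh{\tau}-1$, so the induction closes cleanly. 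The corresponding expansion of $Y(t+h)$ is then obtained from \eqref{eq:rkb} by exactly the same argument, invoking the already-established stage expansions.

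The only genuinely nontrivial ingredient is the bijection in the second step, where one must verify that the $U_f$-indexed series produced by \cref{lem:f_y} collapses to a $T$-indexed series with coefficients $\alpha$ and elementary differentials $F$. Once this has been checked the remainder is bookkeeping completely parallel to the deterministic theory \cite{butcher08nmf}; the stochastic nature of the SDE enters only through the scalar random variable $\DM{t,h}$, which commutes out of every sum and product because $f$ is the single integrand.
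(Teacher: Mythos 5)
Your proposal is correct and follows essentially the same route as the paper: posit the B-series ansatz for the stages, expand $f(H_j)$ via \cref{lem:f_y} with $g=f$, match coefficients of the elementary differentials in \eqref{rk:a} and \eqref{eq:rkb}, and extract the factor $(\DM{t,t+h})^{\rh{\tau}}$ by induction on the tree. Your explicit verification of the bijection $U_f\leftrightarrow T\setminus\{\emptyset\}$ with $G(u)=F(\tau)$, $\beta(u)=\alpha(\tau)$ is in fact slightly more careful than the paper, which simply asserts $U_g=T$ and leaves the final induction implicit.
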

\begin{proof}
  Write $H_i$ as a B-series, that is
  \[H_i = \sum_{\tau\in T}\alpha(\tau)
  \numstageweight_i(\tau)(h)F(\tau)(\xvar).\] Use \eqref{rk:a} together with \cref{lem:f_y} to obtain
  \begin{align*}
    H_i&= \xvar +\DM{{t,t+h}}\sum_{j=1}^sa_{ij}f(H_j)\\
    &=\xvar +\DM{{t,t+h}}\sum_{j=1}^sa_{ij}f\big(\sum_{\tau\in T}\alpha(\tau)
    \numstageweight_j(\tau)(h)F(\tau)(\xvar)\big)\\
    &=\xvar+\DM{{t,t+h}}\sum_{j=1}^sa_{ij}\sum_{\tau\in T}\alpha(\tau)\psi_{\numstageweight_j}(\tau)(h)F(\tau)(\xvar)\\
    &=\xvar+\sum_{\tau\in T}\alpha(\tau)\left(\DM{{t,t+h}}\sum_{j=1}^sa_{ij}\psi_{\numstageweight_j}(\tau)(h)\right)F(\tau)(\xvar).
  \end{align*}
  Thus $\numstageweight_i(\tau)=\DM{{t,t+h}}\sum_{j=1}^sa_{ij}\psi_{\numstageweight_j}(\tau)+\begin{cases}
    1&: \tau=\emptyset\\
    0&: \tau\neq\emptyset
  \end{cases}$, proving \eqref{eq:B_iter_H}. Analogously,
  \begin{align*}
    Y{(t+h)}&=\xvar+\DM{{t,t+h}}\sum_{i=1}^sb_if(H_i)\\
    &=\xvar+\sum_{\tau\in T}\alpha(\tau)\left(\DM{{t,t+h}}\sum_{i=1}^sb_i\psi_{\numstageweight_i}(\tau)(h)\right)F(\tau)(\xvar),
  \end{align*}
  proving \eqref{eq:B_iter_Y}.
  \qed
\end{proof}
To decide the weak order we will also need the B-series of a function
$g$, evaluated at the exact and the numerical solution.
From \cref{th:BseriesexactsolutionODE,thm:Bnum,lem:f_y} we obtain
\[
  g(X({t+}h))=\sum_{u\in U_g}\beta(u)\cdot\psi_\varphit(u)(h)\cdot G(u)(\xvar),
\]
\[
  g(Y{(t+h)})=\sum_{u\in U_g}\beta(u)\cdot\psi_\numweight(u)(h)\cdot G(u)(\xvar),
\]
with \[\psi_\varphit([\emptyset]_g)\equiv1,\quad
\psi_\varphit(u=[\tau_1,{\dots},\tau_\kappa]_g)(h) =
\prod\limits_{j=1}^\kappa \varphit(\tau_j)(h)
\]
and \[\psi_\numweight([\emptyset]_g)\equiv1,\quad
\psi_\numweight(u=[\tau_1,{\dots},\tau_\kappa]_g)(h) =
\prod\limits_{j=1}^\kappa\numweight(\tau_j)(h).\]
So, for the weak local error it follows
\[
  le_g(h;t,x)=\sum_{u\in
  U_g}\beta(u)\cdot\E\left[\psi_\numweight(u)(h)-\psi_\varphit(u)(h)\right]\cdot
  G(u)(x).
\]
For the mean respectively mean square local error we obtain from \cref{th:BseriesexactsolutionODE,thm:Bnum}
\begin{eqnarray*}
  le^{ms}(h;t,x)&=&\sqrt{\E \big(\sum_{\tau\in
  T}\alpha(\tau)\cdot(\numweight(\tau){(h)}-\varphit(\tau){(h)})\cdot F(\tau)(x)\big)^2},\\
  le^m(h;t,x)&=&\sum_{\tau\in T}\alpha(\tau)\cdot
  \E\big(\numweight(\tau){(h)}-\varphit(\tau){(h)}\big)\cdot F(\tau)(x)
  .
\end{eqnarray*}
\section{Proofs of \texorpdfstring{\cref{th:main,th:integration,th:mumoments}}{Theorem \ref{th:main} and Lemmas \ref{th:integration} and \ref{th:mumoments}}} \label{sec:mainproof}
With all the B-series in place, we can now present the order
conditions for the weak and strong convergence. {For convenience, we first summarize the assumptions on $f$:
\begin{assumption}\label{ass:f}
Let $f\in C^{2(p+1)}(\real^d,
        \real^d)$ and $f$ and $f'f$ fulfill a Lipschitz condition.
 Further, assume
\begin{itemize}
    \item for mean-square convergence, that either
 \begin{itemize}
\item[*] all elementary differentials $F(\tau)$ fulfill a linear growth condition, or
\item[*] there exists a
constant $C$ such that $\|f'(y)\|\leq C\quad\forall y\in\real^d$ (which implies the global Lipschitz condition) and all necessary partial
derivatives exist \cite{burrage00oco},
\end{itemize}
\item respectively for weak convergence, that
    $f\in C_P^{2(p+1)}(\real^d,
        \real^d)$.
  \end{itemize}
\end{assumption}}

We have weak consistency of order $\pmu$ if and only if
\begin{equation}
    \label{eq:T_eq}
    \E\psi_\numweight(u)(h)=\E\psi_\varphit(u)(h)+\bO(h^{\pmu+1})\quad\forall u\in U_g
\end{equation}
where
$\rh{u=[\tau_{1},\cdots,\tau_{\kappa}]_{f}}=
\sum_{j=1}^{\kappa}\rh{\tau_{j}}$, and
mean square global order $\pmu$ if and only if
\begin{gather}\label{eq:StrOrdCond1}
{\E\big((\numweight(\tau)(h)-\varphit(\tau)(h))^2\big)=\bO(h^{2\pmu+1})}\quad\forall\tau\in T%
,\\
\label{eq:StrOrdCond2}
\E\numweight(\tau)(h)=\E\varphit(\tau)(h)+\bO(h^{\pmu+1})\quad\forall\tau\in T.%
\end{gather}
{Assume $\DM{t,h}=\M(h)$.}  Due to $\E\M(h)^{2\rh{\tau}}=\bO(h^{\rh{\tau}})$ by \cref{th:mumoments}, \eqref{eq:StrOrdCond1} is then by
\cref{th:BseriesexactsolutionODE,thm:Bnum} automatically fulfilled for all $\tau\in T$ with $\rh{\tau}\geq 2\pmu+1$, and satisfied for the remaining trees if and only if
\begin{equation}\label{eq:StrOrdCond1det}
\hnumweight(\tau)=\frac1{\gamma(\tau)}\quad\forall\tau\in T \text{ with }\rh{\tau}\leq2\pmu.
\end{equation}
Note that \eqref{eq:StrOrdCond1det} is just the condition that for the order $\pd$ of the Runge--Kutta method applied to a deterministic system ($\sigma=0$) it
holds $\pd=2\pmu$.

Similarly, \eqref{eq:StrOrdCond2} is automatically fulfilled for all $\tau\in T$ with
\[\rh{\tau}\geq \begin{cases}2\pmu+2&:\text{ for even }\rh{\tau}\\2\pmu+1&:\text{ for odd }\rh{\tau}\end{cases},\]
and satisfied for the remaining trees if and only if
\begin{equation}\label{eq:StrOrdCond2det}
\hnumweight(\tau)=\frac1{\gamma(\tau)}\quad\forall\tau\in T\text{ with }\rh{\tau}\leq\begin{cases}2\pmu+1&:\text{ for even
}\rh{\tau}\\2\pmu&:\text{ for odd }\rh{\tau}\end{cases}.
\end{equation}
Thus, the method will be mean-square consistent of order $\pmu$ if its deterministic order is
\begin{equation}
  \pd=\begin{cases}
    2\pmu&:\text{ if }\pmu\in\N\\
    2\pmu+1&:\text{ if }\pmu+\frac12\in\N\\
  \end{cases}
\end{equation}
or, vice versa, a method of deterministic order $\pd$ will converge with mean-square order $\lfloor\frac\pd2\rfloor$.

{Assume now that $\DM{t,h}$ is chosen such that at least the first $2\pmu+1$ moments coincide
with {those} of $\M(h)$, and all the others are in $\bO(h^{\pmu+1})$.}
Analogously to the discussion of \eqref{eq:StrOrdCond2}, \eqref{eq:T_eq} is automatically fulfilled for all $u\in U_g$ with
\[\rh{u}\geq \begin{cases}2\pmu+2&:\text{ for even }\rh{u}\\2\pmu+1&:\text{ for odd }\rh{u}\end{cases},\]
and satisfied for the remaining trees if and only if \eqref{eq:StrOrdCond2det} is fulfilled. Thus, we obtain that the weak order of the method equals its
mean-square order, which finishes the proof of \cref{th:main}.

Finally we present the proofs of \cref{th:integration,th:mumoments}.
\begin{proof}[\cref{th:integration}]
  As $\M$ is a semimartingale with continuous paths, it holds for $f\in C^2(\R,\R)$ that \[f(\M(h))-f(\M(0))=\int_0^hf'(\M(s))\circ\dM(s),\]
  which for $f(x)=x^{k+1}$ immediately gives the assertion.
  \qed
\end{proof}
\begin{proof}[\cref{th:mumoments}]
  The assertion follows from
  \[
    \E\M(h)^n=\sum_{i=0}^n\binom{n}{i}{\lambda^{n-i}}h^{n-i}{\sigma^i}\E\big({\big(}W({t+}h){-W(t)\big)}^i\big)
  \]
  and using that
  \[
    \E\big({\big(}W({t+}h){-W(t)\big)}^i\big)=h^{\frac{i}2}\begin{cases}
      0&:\text{ if $i$ is odd},\\
      (i-1)\cdot(i-3)\cdot\dots\cdot3\cdot1&\text{: otherwise.}
    \end{cases}
  \]\qed
\end{proof}
\section{Numerical experiments}\label{sec:numexp}
To verify the theoretical result, we solve three test problems by method \eqref{eq:SRK}, based
on some classes of well known deterministic Runge--Kutta methods. The first group consists of the Gauss
methods of deterministic order $\pd = 2s$, in which case the predicted stochastic order $\pmu=s$, as
already demonstrated in Example \ref{ex:intro}. Using the results from \cite{ma12sca, hong15poq} it
is straightforward to confirm that the Gauss methods preserve quadratic invariants.  Two of the test problems below have
such invariants, for these problems we also {demonstrate} this conservation property. The second group of
methods consists of the Radau IIA methods, these are
of deterministic order $p_d=2s-1$, thus $\pmu = s-1$. Finally, we consider three explicit
Runge--Kutta (ERK) methods: A third order, three stage method \cite[RK32, p.~95]{butcher08nmf}, the
classical fourth order Runge--Kutta method \cite[p.~138]{hairer10sodI} and the fifth order Fehlberg method
\cite[p.~177]{hairer10sodI}. In the following, they will be denoted by ERK3, ERK4 and ERK5,
respectively.

For the calculation of the numerical order $p_{num}$,
errors less than $10^{-14}$ have been ignored.
\begin{example}
  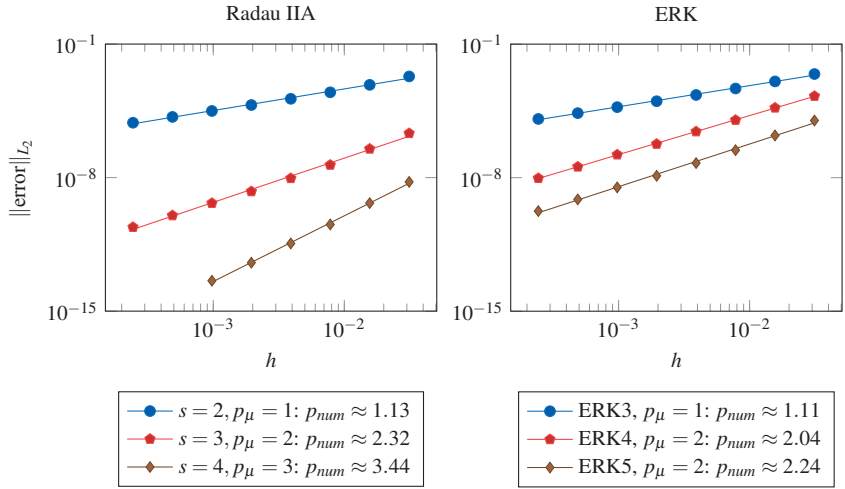
\begin{figure}
    \begin{tikzpicture}[baseline]
    \end{tikzpicture}
    \begin{tikzpicture}[baseline]
    \end{tikzpicture}
    \caption{Error plots for the Radau IIA and the ERK methods applied to \eqref{eq:ex_intro}. }
    \label{fig:KP1}
  \end{figure}
  This is a continuation of Example \ref{ex:intro}. The SDE \eqref{eq:ex_intro} is solved by the
  Radau IIA methods, as well as the three explicit methods. The number of independent simulations is
  still $M=10,000$. Convergence plots are given in Figure \ref{fig:KP1}, as well as the estimated
  order $p_{num}$. The order $p_{num}$ is slightly above $\pmu$, probably because the error of the
  deterministic part becomes more dominant for larger step sizes.
\end{example}

\begin{example}

  \begin{figure}
    \begin{tikzpicture}[baseline]
    \end{tikzpicture}
     \hspace*{\fill}
    \begin{tikzpicture}[baseline]
    \end{tikzpicture}
    \\[5mm]
    \begin{tikzpicture}[baseline]
    \end{tikzpicture}
     \hspace*{\fill}
    \begin{tikzpicture}[baseline]
    \end{tikzpicture}
    \caption{Error plots for the Kubo oscillator \eqref{eq:kubo} (\textit{top, and bottom left}).
    The solution points for one trajectory, using ERK5 (Fehlberg), Radau IIA ($s=3$) and Gauss ($s=2$)
    with $h=0.5$ for $t\leq 1000$ (\textit{bottom right}).}
    \label{fig:kubo}
  \end{figure}
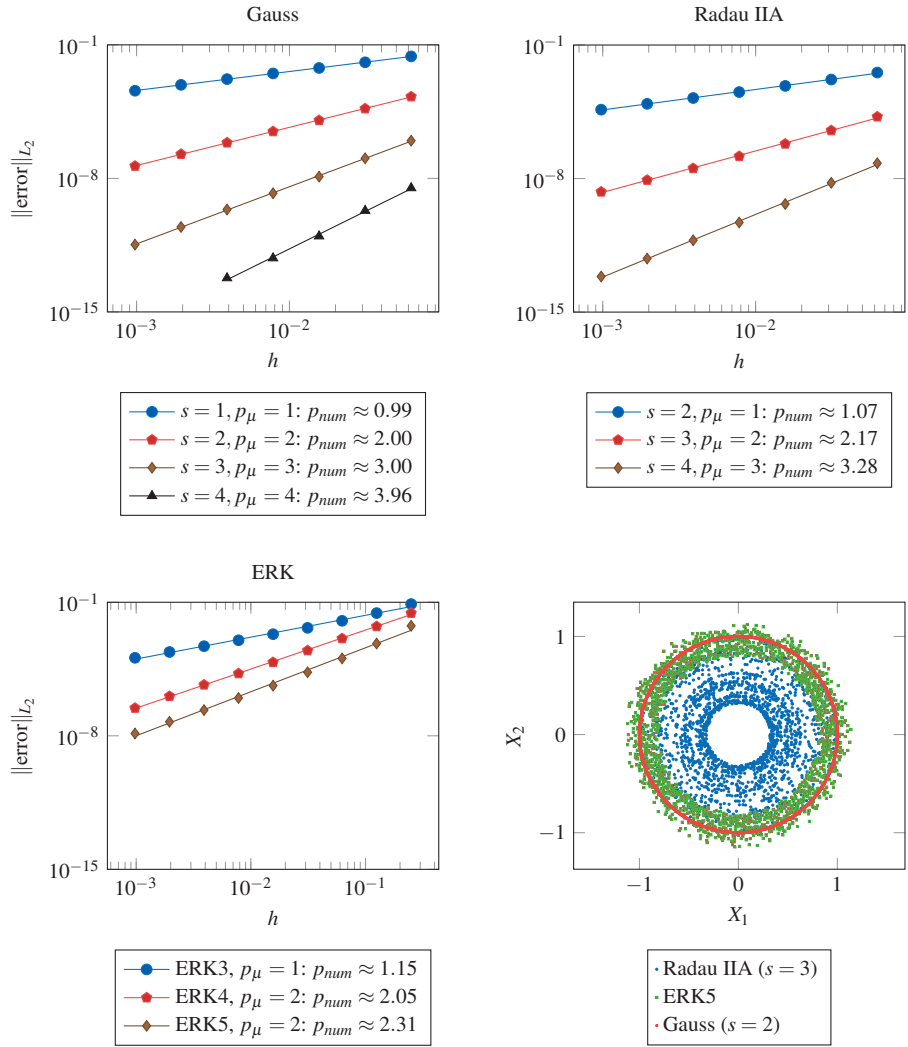
  The next example is the well known Kubo oscillator, a prototype problem for solving oscillatory
  problems, \cite[sec.4.4.1]{milstein04snf}. The SDE is given by
  \begin{equation}
    \dX = \begin{bmatrix} 0 & -a \\ a &0 \end{bmatrix}{X} \dt
    + \begin{bmatrix} 0 & -\sigma \\ \sigma &0 \end{bmatrix}{X} \circ\dW, \qquad
    \label{eq:kubo}
  \end{equation}
  where $a$ and $\sigma$ are real parameters. With $X(0)=[1,0]^T$ this problem has as exact
  solution $X(t)=[\cos{(at+\sigma W(t))}, \sin{(at+\sigma W(t))}]^T$. In our experiments, we
  have used $a=\sigma=1.0$, the mean square error at $t=1.0$ is
  estimated based on $M=1,000$ simulations.

  The Kubo oscillator has the invariant
  $I(X)=X_1^2+X_2^2$.
  To see how well this is preserved by the numerical methods, we have computed one solution path
  by the Gauss ($s=2$) method, one by the Radau IIA ($s=3$) and one by the
  ERK5 method, all with $\pmu=2$. The step size was $h=0.5$, and the integration interval [0,1000].
  From the picture at the bottom right of Figure \ref{fig:kubo} it is clear that the Gauss solution
  stays on the circle given by $I(X(t))=I(X(0))$, the others do not.

\end{example}
\begin{example}
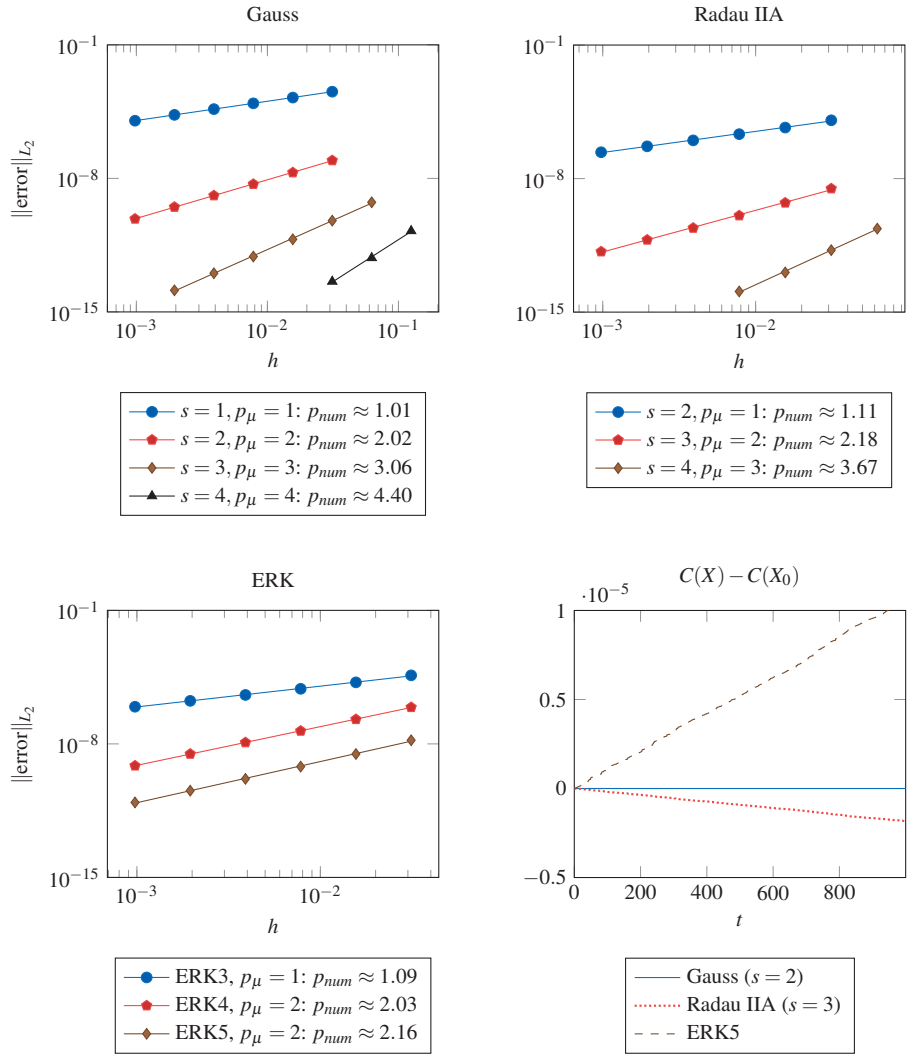
\begin{figure}
  \begin{tikzpicture}[baseline]
  \end{tikzpicture} \hspace*{\fill}
  \begin{tikzpicture}[baseline]
  \end{tikzpicture} \\[5mm]
  \begin{tikzpicture}[baseline]
  \end{tikzpicture}  \hspace*{\fill}
  \begin{tikzpicture}[baseline]
    \end{tikzpicture}
  \caption{Error plots for the stochastic rigid body problem \eqref{eq:rigid_body} (\textit{top and bottom left}).
    Drift of the Casimir $C(X)$ for  Gauss ($s=2$), Radau IIA ($s=3$) and ERK5
    (Fehlberg) using $h=2^{-5}$ for $t\leq 1000$ (\textit{bottom right}).}
  \label{fig:rigid_body}
\end{figure}
This example is based on the deterministic rigid body model from \cite{hairer06gni}. This model has also been used in \cite{cohen14epi} for studying
energy-preserving integrators. The SDE is given by
  \begin{align}
    \dX&=A(X)X\dt + \sigma \,A(X)X\circ\dW \label{eq:rigid_body}
  \end{align}
  with
  \begin{align*}
    A(X)&=\left(
    \begin{array}{ccc}
      0 & X_3/I_3 & -X_2/I_2 \\
      -X_3/I_3 & 0 & X_1/I_1 \\
      X_2/I_2 & -X_1/I_1 & 0
    \end{array}
    \right)
  \end{align*}
  and parameters $I_1=2$, $I_2=1$ and $I_3=2/3$. As initial value we choose
  $X(0) = (\cos(1.1), 0, \sin(1.1))^T$. This problem  conserves the
  invariants
  \begin{align*}
    H(X)&=\frac{1}{2}\left(\frac{X_1^2}{I_1}+\frac{X_2^2}{I_2}+\frac{X_3^2}{I_3}\right),\qquad
    C(X) = X_1^2 + X_2^2 + X_3^2.
    \label{eq:c}
  \end{align*}
  The equation with $\sigma=0.5$ was solved, and the mean square errors at $t=1.0$ based on $M=1,000$
  independent simulations are presented in Figure \ref{fig:rigid_body}. For the Gauss and Radau IIA
  methods, some Newton iterations fail for the larger step sizes. In order to demonstrate the
  conservative properties of the Gauss method, a plot of the Casimir $C(X)$ for one trajectory
  computed by the Gauss ($s=2$) method, the Radau IIA ($s=3$) method and ERK5 has been included.
  As expected, the Gauss method preserves the Casimir, the others do not.
\end{example}

\section{Conclusion}
\label{sec:conclusion}
We have proved that a straightforward extension of deterministic Runge--Kutta methods of order $p$ to
Stratonovich single integrand SDEs results in methods converging with order $\lfloor p/2\rfloor$ in
the mean-square and weak sense. They also inherit certain
properties from their deterministic origin, like preservation of quadratic invariants. These methods are cheaply implementable and
seem to be preferable for solving single integrand problems.

\def\cprime{$'$}

\end{document}